\theoremstyle{plain}
\newtheorem{theorem}{\indent\sc Theorem}[section]
\newtheorem{lemma}[theorem]{\indent\sc Lemma}
\newtheorem{corollary}[theorem]{\indent\sc Corollary}
\newtheorem{proposition}[theorem]{\indent\sc Proposition}
\theoremstyle{definition}
\newtheorem{remark}[theorem]{\indent\sc Remark}
\newtheorem{example}[theorem]{\indent\sc Example}
\newcommand\on{\operatorname}
\renewcommand\div{\on{div}}
\newcommand\Ric{\on{Ric}}
\newcommand\scal{\on{scal}}
\newcommand\Id{\on{Id}}
\newcommand\func{\operatorname}
\newcommand\grad{\func{grad}}
\begin{document}

\title{Remarks on Riemann and Ricci solitons\\
in $(\alpha,\beta)$-contact metric manifolds}
\author{Adara M. Blaga and Dan Radu La\c tcu}
\date{}
\maketitle

\begin{abstract}
We study almost Riemann solitons and almost Ricci solitons in
an $(\alpha,\beta)$-contact metric manifold satisfying some
Ricci symmetry conditions, treating the case when the potential
vector field of the soliton is pointwise collinear with
the structure vector field.
\end{abstract}

\markboth{{\small\it {\hspace{2cm} Remarks on Riemann and Ricci solitons
in $(\alpha,\beta)$-contact metric manifolds}}}{\small\it{Remarks on Riemann and Ricci solitons
in $(\alpha,\beta)$-contact metric manifolds
\hspace{2cm}}}

%%%%%%%%%%%%%%% footnote %%%%%%%%%%%%%%%%
\footnote{%2010 MSC numbers
2020 \textit{Mathematics Subject Classification}. 35Q51, 53B25, 53B50.}
\footnote{%key words and phrases
\textit{Key words and phrases}. Riemann solitons, Ricci solitons, almost contact metric manifold.}

\section{Introduction}

Riemann and Ricci solitons are generalized fixed points of the Riemann and Ricci
flow, respectively. They are defined by a smooth vector field $V$ and a real
constant $\lambda$ which satisfy respectively the following equations
\begin{equation}\label{2}
\frac{1}{2}\pounds_{V}g\odot g+R=\lambda G
\qquad
\text{(Riemann soliton)}
\end{equation}
where $G:=\frac{1}{2}g\odot g$, $\pounds_{V}$ is the Lie derivative operator
in the direction of the vector field $V$, $R$ is the Riemann curvature of $g$,
and
\begin{equation}\label{1}
\frac{1}{2}\pounds_{V}g+\Ric=\lambda g
\qquad
\text{(Ricci soliton)}
\end{equation}
where $\Ric$ is the Ricci curvature of $g$. The above notation $\odot$ stands
for the Kulkarni-Nomizu product, which for two arbitrary $(0,2)$-tensor fields
$T_1$ and $T_2$ on $M$, is defined by
\pagebreak
$$
(T_1\odot T_2)(X,Y,Z,W):=T_1(X,W)T_2(Y,Z)+T_1(Y,Z)T_2(X,W)$$$$-T_1(X,Z)T_2(Y,W)-T_1(Y,W)T_2(X,Z),
$$ for any $X$, $Y$, $Z$, $W\in\mathfrak{X}(M)$, where $\mathfrak{X}(M)$ is the set of
all vector fields on $M$.

If $\lambda$ is a smooth function in~\eqref{2} and~\eqref{1},
then the soliton will be called almost Riemann and almost Ricci
soliton, respectively.

%\smallskip

Some years ago Udri\c ste~\cite{ud} has studied the relations between Riemann flow and Ricci
flow. Then, in the context of contact geometry, Hiric\u a and Udri\c ste
proved~\cite{hi} that if a Sasakian manifold admits a Riemann soliton with
potential vector field pointwise collinear with the structure vector field
$\xi$, then it is a Sasakian space form. Recently, Devaraja,
Kumara and Venkatesha~\cite{de, v} and De and De~\cite{uday}, gave some geometric properties of Riemann solitons in
Kenmotsu, Sasakian and $K$-contact manifolds.

%\smallskip

Inspired by the above studies, in the present paper, we establish some
properties of almost Riemann and almost Ricci solitons $(V,\lambda)$ in
an $(\alpha,\beta)$-contact metric manifold $(M,\phi,\xi,\eta,g)$ under
some additional assumptions: Ricci symmetry and $\phi$-Ricci symmetry.
We consider the case when the potential vector field $V$ of the soliton is
pointwise collinear with the structure vector field $\xi$, i.e., it belongs
to the vertical distribution $\langle\xi\rangle$ which is $g$-orthogonal to
the contact distribution $\ker(\eta)$. In particular, if $V$ is a constant
multiple of $\xi$, then any $\beta$-Sasakian manifold admitting an almost
Riemann or an almost Ricci soliton is an Einstein manifold.

\section{$(\alpha,\beta)$-contact metric structures}

Let $(M,\phi,\xi,\eta,g)$ be a $(2n+1)$-dimensional almost contact metric
manifold, i.e., a smooth manifold $M$ equipped with a $(1,1)$-tensor field
$\phi$, a vector field $\xi$, a $1$-form $\eta$ and a Riemannian metric $g$
satisfying~\cite{sas}
\[
\arraycolsep 2pt
\begin{array}{@{}c@{\qquad}rcl@{\qquad}rcl@{}}
%\begin{gathered}
\phi^2=-(\Id-\eta\otimes\xi),&
\eta(\xi)&=&1,&
g(\phi\cdot,\phi\cdot)&=&g-\eta\otimes\eta\\
\phi\xi=0,\qquad\eta\circ\phi=0,&
i_{\xi}g&=&\eta,&
g(\phi\cdot,\cdot)&=&-g(\cdot,\phi\cdot).
%\end{gathered}
\end{array}
\]

If there exist two smooth functions $\alpha$ and $\beta$ on $M$ such that
the Levi-Civita connection $\nabla$ of $g$ satisfies
\begin{equation}\label{e2}
(\nabla_{X}\phi)Y=\alpha[g(\phi X,Y)\xi-\eta(Y)\phi X]+
\beta[g(X,Y)\xi-\eta(Y)X],
\end{equation}
for any $X$, $Y\in\mathfrak{X}(M)$, then we call $M$
\emph{an $(\alpha,\beta)$-contact metric manifold}
(called also \emph{trans-Sasakian manifold}~\cite{ou}).

In particular, if $\alpha=\beta=0$, $M$ is a \emph{cosymplectic} manifold;
if $\alpha=0$ and $\beta$ is a nonzero constant, $M$ is
a \emph{$\beta$-Sasakian} manifold (in particular, Sasakian if $\beta=1$),
if $\beta=0$ and $\alpha$ is a nonzero constant, $M$ is
an \emph{$\alpha$-Kenmotsu} manifold (in particular, Kenmotsu if $\alpha=1$).

By a direct computation, we obtain
\begin{equation}\label{16}
\nabla\xi=-\alpha\phi^2-\beta\phi,
\qquad
\pounds_{\xi}g=2\alpha(g-\eta\otimes\eta),
\qquad
\div(\xi)=2n\alpha.
\end{equation}

Remark that for any $X$, $Y\in\mathfrak{X}(M)$, we have
\[
(\nabla_X\phi)Y=g({F_{\alpha,\beta}X},Y)\xi-\eta(Y)F_{\alpha,\beta}X,
\qquad
\nabla_X\xi=-F_{\alpha,\beta}(\phi X),
\]
where $F_{\alpha,\beta}:=\alpha\phi+\beta\Id$. Also, notice that
\[
(\nabla_XF_{\alpha,\beta})Y=\alpha(\nabla_X\phi)Y+F_{X(\alpha),X(\beta)}Y.
\]

Let $(M,\phi,\xi,\eta,g)$ be a $(2n+1)$-dimensional
$(\alpha,\beta)$-contact metric manifold.

\begin{lemma}
If $V$ is a vector field on $M$ pointwise collinear with $\xi$, i.e.,
$V=\eta(V)\xi$, then
\begin{align}
\begin{split}
\nabla V&=[d (\eta(V))-\alpha \eta(V)\eta]\otimes\xi+
\eta(V)(\alpha \Id-\beta \phi)\\
&=\eta(V)\cdot\nabla \xi+d (\eta(V))\otimes\xi
\end{split}\label{ee}\\
\begin{split}
\pounds_{V}g&=d (\eta(V))\otimes\eta+\eta\otimes d (\eta(V))+
2\alpha\eta(V)(g-\eta\otimes\eta)\\
&=\eta(V)\cdot\pounds_{\xi}g+
d (\eta(V))\otimes\eta+\eta\otimes d (\eta(V))
\end{split}\label{pp}\\
\div(V)&=2n\alpha\eta(V)+\xi(\eta(V))
=\eta(V)\cdot\div(\xi)+(d (\eta(V)))(\xi).\label{hh}
\end{align}
\end{lemma}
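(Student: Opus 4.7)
The plan is to write $V=f\xi$ with $f:=\eta(V)$, reduce everything to the known formulas in \eqref{16}, and then read off the three displayed identities.

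First I would compute $\nabla V$ by Leibniz: for any $X\in\mathfrak{X}(M)$,
\[
\nabla_X V=\nabla_X(f\xi)=X(f)\,\xi+f\,\nabla_X\xi.
\]
Using $\nabla\xi=-\alpha\phi^2-\beta\phi$ from \eqref{16} together with $\phi^2=-(\Id-\eta\otimes\xi)$, the middle term becomes $f[\alpha X-\alpha\eta(X)\xi-\beta\phi X]$, and collecting the coefficients of $\xi$ separately from the ``horizontal'' part yields precisely the first asserted formula; the second form follows by re-grouping using $\nabla\xi$ itself.

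Next I would obtain $\pounds_V g$ from $(\pounds_V g)(X,Y)=g(\nabla_X V,Y)+g(\nabla_Y V,X)$. Substituting \eqref{ee} (before its final rearrangement), the contributions involving $-\beta\phi$ cancel because $\phi$ is skew with respect to $g$, i.e.\ $g(\phi X,Y)+g(X,\phi Y)=0$. What remains is
\[
[X(f)-\alpha f\eta(X)]\eta(Y)+[Y(f)-\alpha f\eta(Y)]\eta(X)+2\alpha f\,g(X,Y),
\]
which rearranges to $d(\eta(V))\otimes\eta+\eta\otimes d(\eta(V))+2\alpha\eta(V)(g-\eta\otimes\eta)$, i.e.\ \eqref{pp}. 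The second form is immediate from $\pounds_\xi g=2\alpha(g-\eta\otimes\eta)$.

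Finally, for the divergence I would take the trace of the endomorphism $X\mapsto\nabla_X V$ using \eqref{ee}. The rank-one piece $[d(f)-\alpha f\eta]\otimes\xi$ contributes $(d(f)-\alpha f\eta)(\xi)=\xi(f)-\alpha f$, while $f(\alpha\Id-\beta\phi)$ contributes $(2n+1)\alpha f$ since $\trace\phi=0$ and $\dim M=2n+1$. Adding gives $\xi(f)+2n\alpha f$, which is \eqref{hh}; the alternative form is just $f\cdot\div(\xi)+(df)(\xi)$ via \eqref{16}. There is no real obstacle beyond careful bookkeeping; the only point that must be used explicitly, other than \eqref{16}, is the $g$-skewness of $\phi$ (to kill the $\beta$-term in $\pounds_V g$) and the vanishing of $\trace\phi$ (to simplify $\div V$).
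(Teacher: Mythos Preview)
Your proof is correct and follows essentially the same route as the paper: Leibniz for $\nabla_X(f\xi)$ with $f=\eta(V)$, the Koszul-type identity $(\pounds_Vg)(X,Y)=g(\nabla_XV,Y)+g(\nabla_YV,X)$, and then a trace to get the divergence. The only cosmetic difference is that the paper computes $\div(V)$ by summing over an orthonormal frame, whereas you read the trace directly from the decomposition of $\nabla V$ (using $\trace\phi=0$ and that the rank-one piece contributes its value on $\xi$); these are the same computation.
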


\begin{proof}
For any $X$, $Y\in\mathfrak{X}(M)$, we have
\begin{align*}
\nabla_X V={}&X(\eta(V))\xi+\eta(V)\nabla_X\xi\\
={}&[X(\eta(V))-\alpha\eta(V)\eta(X)]\xi+\eta(V)(\alpha X-\beta\phi X)\\
(\pounds_{V}g)(X,Y)={}&g(\nabla_XV,Y)+g(X,\nabla_YV)\\
={}&X(\eta(V))\eta(Y)+Y(\eta(V))\eta(X)\\
&-2\alpha\eta(V)\eta(X)\eta(Y)+2\alpha\eta(V)g(X,Y)\\
\div(V)={}&\sum_{i=1}^{2n+1}g(\nabla_{E_i}V,E_i)\\
={}&\sum_{i=1}^{2n+1}\{[E_i(\eta(V))-\alpha \eta(V)\eta(E_i)]\eta(E_i)\\
&+\eta(V)[\alpha g(E_i,E_i)-\beta g(\phi E_i,E_i)]\}\\
={}&2n\alpha\eta(V)+\xi(\eta(V)),
\end{align*}
where $\{E_i\}_{1\le i\le 2n+1}$ is a $g$-orthonormal frame field on $M$.
Now using equations~\eqref{16} we obtain the relations~\eqref{ee}--\eqref{hh}.
\end{proof}

%\pagebreak

\begin{example}
Consider the $3$-dimensional Kenmotsu manifold $(M, \phi,\xi,\eta,g)$, where $M=\{(x,y,z)\in\mathbb{R}^3| z>1\}$, with $(x,y,z)$ the standard coordinates in $\mathbb{R}^3$, and
\begin{align*}
\begin{split}
\phi&= d x\otimes \frac{\displaystyle \partial}{\displaystyle \partial y}-d y\otimes \frac{\displaystyle \partial}{\displaystyle \partial x},\qquad
\xi=\frac{\displaystyle \partial}{\displaystyle \partial z},\qquad
\eta= d z,\\
g&=e^{2z}(d x\otimes d x+d y\otimes d y)+d z\otimes d z.
\end{split}
\end{align*}
An orthonormal frame field on $TM$ is given by
$$E_1=e^{-z}\frac{\partial}{\partial x},\qquad E_2=e^{-z}\frac{\partial}{\partial y},\qquad E_3=\frac{\partial}{\partial z}\cdot$$
The nonzero components of the Levi-Civita connection are
$$\nabla_{E_1}E_1=-E_3=\nabla_{E_2}E_2,\qquad \nabla_{E_1}E_3=E_1,\qquad \nabla_{E_2}E_3=E_2,$$
the nonzero components of the Riemann and Ricci curvatures are
\begin{align*}
R(E_1,E_2)E_2=&-E_1=R(E_1,E_3)E_3\\
R(E_2,E_1)E_1=&-E_2=R(E_2,E_3)E_3\\
R(E_3,E_1)E_1=&-E_3=R(E_3,E_2)E_2\\
\Ric(E_i,E_i)=&-2,\qquad i\in \{1,2,3\}.
\end{align*}
For $V=e^z\frac{\displaystyle \partial}{\displaystyle \partial z}$, the Lie derivative of $g$ in the direction of $V$ is given by
$$(\pounds _{V}g)(E_1,E_1)=(\pounds _{V}g)(E_2,E_2)=(\pounds _{V}g)(E_3,E_3)=2e^z.$$
Then the pair \ $(V=e^z\frac{\displaystyle \partial}{\displaystyle \partial z}, \ \lambda=2e^z-1)$ \ defines an almost Riemann soliton, respectively, \ $(V=e^z\frac{\displaystyle \partial}{\displaystyle \partial z}, \ \lambda=e^z-2)$ \ defines an almost Ricci soliton.
\end{example}

\section{Almost Riemann and almost Ricci solitons}

Consider now $(V,\lambda)$ an almost Riemann soliton on $M$ and assume that
the potential vector field $V$ is pointwise collinear with $\xi$, i.e.,
$V=\eta(V)\xi$.

Expressing the Riemann soliton equation~\eqref{2}, we get
\begin{equation}\label{3}
\begin{gathered}
2R(X,Y,Z,W)+g(X,W)(\pounds_{V}g)(Y,Z)+g(Y,Z)(\pounds_{V}g)(X,W)\\
-g(X,Z)(\pounds_{V}g)(Y,W)-g(Y,W)(\pounds_{V}g)(X,Z)\\
=2\lambda[g(X,W)g(Y,Z)-g(X,Z)g(Y,W)]
\end{gathered}
\end{equation}
and contracting it over $X$ and $W$, we find
\begin{equation}\label{4}
\frac{1}{2}\pounds_{V}g+\frac{1}{2n-1}\Ric=\frac{2n\lambda-\div(V)}{2n-1}g
\end{equation}
and further
\begin{equation}\label{9}
\scal=2n[(2n+1)\lambda-2\div(V)].
\end{equation}

Then replacing equation~\eqref{pp} in equalities~\eqref{4} and~\eqref{9}, we obtain
\begin{equation}\label{22}
\begin{aligned}
\Ric={}&-\frac{2n-1}{2}[d (\eta(V))\otimes\eta+\eta\otimes d (\eta(V))-
2\alpha\eta(V)\eta\otimes\eta]\\
&+[2n\lambda-(4n-1)\alpha\eta(V)-\xi(\eta(V))]g\\
Q={}&-\frac{2n-1}{2}\{[d (\eta(V))-2\alpha\eta(V)\eta]\otimes\xi+
\eta\otimes\grad(\eta(V))\}\\
&+[2n\lambda-(4n-1)\alpha\eta(V)-\xi(\eta(V))]\Id
\end{aligned}
\end{equation}
and
\begin{equation}\label{s2}
\scal=2n[(2n+1)\lambda-4n\alpha\eta(V)-2\xi(\eta(V))],
\end{equation}
where $Q$ is the Ricci operator defined by $g(QX,Y):=\Ric(X,Y)$.

Consider now $(V,\lambda)$ an almost Ricci soliton on
$M$ and assume that the potential vector field $V$ is
pointwise collinear with $\xi$, i.e., $V=\eta(V)\xi$.

Contracting the Ricci soliton equation~\eqref{1} and considering~\eqref{pp},
we get
\begin{align}
\Ric={}&-\frac{1}{2}[d (\eta(V))\otimes\eta+\eta\otimes d (\eta(V))]+
[\lambda-\alpha\eta(V)]g+\alpha\eta(V)\eta\otimes\eta\notag\\
Q={}&-\frac{1}{2}\{[d (\eta(V))-2\alpha\eta(V)\eta]\otimes\xi+
\eta\otimes\grad(\eta(V))\}\label{23}\\
&+[\lambda-\alpha\eta(V)]\Id\notag
\end{align}
and
\begin{equation}\label{s1}
\scal=(2n+1)\lambda-2n\alpha\eta(V)-\xi(\eta(V)).
\end{equation}

\begin{remark}
If the vector field $V$ is a constant multiple of $\xi$, then any $\beta$-Sasakian manifold
admitting $(V,\lambda)$ as almost Riemann, respectively, almost Ricci soliton, is an Einstein manifold of scalar curvature
$\lambda=\dfrac{\scal}{2n(2n+1)}$, respectively, $\lambda=\nolinebreak \dfrac{\scal}{2n+1}\cdot$
\end{remark}

\begin{proposition}
Let $(V,\lambda)$ define an almost Riemann or an almost Ricci soliton on
the $(\alpha,\beta)$-contact metric manifold $(M,\phi,\xi,\eta,g)$
such that the potential vector field $V$ is pointwise collinear with
$\xi$, i.e., $V=\eta(V)\xi$. Then
\begin{enumerate}
\item%\hspace{0.2 cm}
$\phi\circ Q=Q\circ\phi$ \ if and only if \
$\eta\otimes \phi(\grad(\eta(V)))=[d (\eta(V))\circ\phi]\otimes\xi$
\item%\hspace{0.2 cm}
$\phi^2\circ Q=Q\circ\phi^2$ \ if and only if \
$\eta\otimes \grad(\eta(V))=d (\eta(V))\otimes\xi$.
\end{enumerate}
\end{proposition}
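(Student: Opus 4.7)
The plan is to treat both soliton cases uniformly by noting that in equations~\eqref{22} and~\eqref{23} the Ricci operator has the common shape
\[
Q = c_1\bigl\{[d(\eta(V))-2\alpha\eta(V)\eta]\otimes\xi+\eta\otimes\grad(\eta(V))\bigr\}+c_2\,\Id,
\]
with $c_1,c_2$ smooth functions that differ between the Riemann and the Ricci soliton cases (namely $c_1=-\frac{2n-1}{2}$ versus $c_1=-\frac{1}{2}$, and the corresponding $c_2$ read off from~\eqref{22} and~\eqref{23}). Since commutation with $\phi$ or $\phi^2$ does not see the $c_2\,\Id$ summand and is linear in $c_1$, neither constant affects the resulting characterization, so both parts of the proposition will hold simultaneously in both soliton settings.

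For part~(1), I would evaluate both sides on an arbitrary $X\in\mathfrak{X}(M)$. Writing $f:=\eta(V)$, we have
\[
QX = c_1\bigl\{[X(f)-2\alpha f\,\eta(X)]\xi + \eta(X)\grad(f)\bigr\}+c_2 X.
\]
Applying $\phi$ and using $\phi\xi=0$ kills the $\xi$-component, giving $\phi(QX)=c_1\,\eta(X)\,\phi(\grad(f))+c_2\,\phi X$. On the other hand, substituting $\phi X$ for $X$ in the formula for $Q$ and using $\eta\circ\phi=0$ kills the second bracketed term, giving $Q(\phi X)=c_1(\phi X)(f)\,\xi+c_2\,\phi X=c_1(d(\eta(V))\circ\phi)(X)\,\xi+c_2\,\phi X$. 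Subtracting, the $c_2$-terms cancel and the identity $\phi QX=Q\phi X$ for every $X$ is equivalent to $\eta(X)\,\phi(\grad(\eta(V)))=(d(\eta(V))\circ\phi)(X)\,\xi$ for every $X$, which is exactly the tensor identity stated.

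For part~(2), I would use the almost-contact relation $\phi^2=-\Id+\eta\otimes\xi$, which reduces the commutator with $\phi^2$ to
\[
\phi^2 QX-Q\phi^2 X=\eta(QX)\,\xi-\eta(X)\,Q\xi.
\]
Computing $\eta(QX)=c_1\bigl[X(f)-2\alpha f\,\eta(X)+\eta(X)\,\xi(f)\bigr]+c_2\,\eta(X)$ (using $\eta(\grad f)=\xi(f)$) and $Q\xi=c_1\bigl\{[\xi(f)-2\alpha f]\xi+\grad(f)\bigr\}+c_2\,\xi$, I expect the $\eta(X)\xi(f)\xi$ and $-2\alpha f\,\eta(X)\xi$ contributions to cancel against the corresponding pieces of $\eta(X)Q\xi$, together with all $c_2$-terms, leaving precisely
\[
\phi^2 QX-Q\phi^2 X=c_1\bigl[X(\eta(V))\,\xi-\eta(X)\,\grad(\eta(V))\bigr].
\]
Vanishing for all $X$ then gives the stated equivalence $\eta\otimes\grad(\eta(V))=d(\eta(V))\otimes\xi$.

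The whole argument is mechanical once the common form of $Q$ is isolated; the only mild subtlety is keeping track of which terms drop out under $\phi\xi=0$, $\eta\circ\phi=0$, and $\eta(\grad f)=\xi(f)$, and verifying that the $c_2\,\Id$ contribution commutes with $\phi$ and $\phi^2$ so that the result is independent of whether we are in the Riemann or Ricci soliton case. I do not foresee a substantive obstacle, only the bookkeeping of the cancellations in part~(2).
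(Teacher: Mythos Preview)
Your proposal is correct and is exactly the ``direct computation'' from~\eqref{22} and~\eqref{23} that the paper invokes without writing out; your unification via the common form $Q=c_1\{[d(\eta(V))-2\alpha\eta(V)\eta]\otimes\xi+\eta\otimes\grad(\eta(V))\}+c_2\,\Id$ is precisely what makes both soliton cases collapse to the same calculation, and the cancellations you anticipate in part~(2) all go through as stated.
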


\begin{proof}
From~\eqref{22} and~\eqref{23}, by a direct computation we get the conclusions.
\end{proof}

From the above considerations, we obtain

\begin{proposition}
If $(\xi,\lambda)$ defines an almost Riemann, respectively, an almost Ricci soliton on
the $(2n+1)$-di\-men\-sional $(\alpha,\beta)$-contact metric manifold
$(M,\phi,\xi,\eta,g)$, then
\begin{enumerate}
\item
$M$ is an almost quasi-Einstein manifold with the defining functions\\
$2n\lambda-(4n-1)\alpha$ \ and \ $(2n-1)\alpha$, respectively, \ $\lambda-\alpha$ \ and \ $\alpha$
\item
$\scal=2n[(2n+1)\lambda-4n\alpha]$, \ respectively, \ $\scal=(2n+1)\lambda-2n\alpha$.
\end{enumerate}
In particular, any $\beta$-Sasakian manifold admitting an almost Riemann or an almost Ricci
soliton $(\xi,\lambda)$ is an Einstein manifold.
\end{proposition}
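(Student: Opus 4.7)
The plan is to obtain this proposition as a direct specialization of the formulas already established in the excerpt, setting $V=\xi$ in equations~\eqref{22},~\eqref{s2},~\eqref{23}, and~\eqref{s1}. The key observation is that when $V=\xi$ one has $\eta(V)=\eta(\xi)=1$, which is a constant function on $M$; consequently $d(\eta(V))=0$, $\grad(\eta(V))=0$, and $\xi(\eta(V))=0$. This collapses all the terms that carry derivatives of $\eta(V)$, leaving only the terms proportional to $g$ and to $\eta\otimes\eta$.

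First I would handle the almost Riemann soliton case. Substituting $\eta(V)=1$ (and its vanishing derivative) into~\eqref{22} reduces the Ricci tensor to
\[
\Ric=[2n\lambda-(4n-1)\alpha]\,g+(2n-1)\alpha\,\eta\otimes\eta,
\]
which is precisely the almost quasi-Einstein form with the two defining functions announced in (1). The scalar curvature formula in (2) is then read off directly from~\eqref{s2} with $\eta(V)=1$, $\xi(\eta(V))=0$. Next I would repeat the same substitution in~\eqref{23} and~\eqref{s1} to obtain the Ricci soliton statement: $\Ric=(\lambda-\alpha)g+\alpha\,\eta\otimes\eta$ and $\scal=(2n+1)\lambda-2n\alpha$.

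Finally, the $\beta$-Sasakian particular case follows immediately by specializing to $\alpha=0$: the $\eta\otimes\eta$ component of $\Ric$ disappears in both soliton equations, so $\Ric$ becomes a scalar multiple of $g$, i.e.\ $M$ is Einstein.

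There is no real obstacle: the work has already been done in deriving~\eqref{22},~\eqref{23},~\eqref{s2} and~\eqref{s1}, and the proposition is essentially the observation that the potential $V=\xi$ produces $\eta(V)\equiv 1$, trivializing the differentiated terms. The only points that deserve care are keeping track of the signs and the coefficient $(4n-1)\alpha$ coming from $-(2n-1)\alpha-2n\alpha$ in the Riemann case, and noting that the two defining functions of the almost quasi-Einstein structure are precisely the coefficients of $g$ and $\eta\otimes\eta$ in the resulting expression for $\Ric$.
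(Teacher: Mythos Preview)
Your proposal is correct and matches the paper's own argument: the paper simply records the two formulas $\Ric=[2n\lambda-(4n-1)\alpha]g+(2n-1)\alpha\,\eta\otimes\eta$ and $\Ric=(\lambda-\alpha)g+\alpha\,\eta\otimes\eta$, obtained exactly as you describe by setting $\eta(V)=1$ (hence $d(\eta(V))=0$) in~\eqref{22} and~\eqref{23}, and reads off the conclusions.
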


\begin{proof}
In the Riemann soliton case we have
\[
\Ric=[2n\lambda-(4n-1)\alpha]g+(2n-1)\alpha\eta\otimes \eta
\]
and in the Ricci soliton case we have
\[
\Ric=(\lambda-\alpha)g+\alpha\eta\otimes \eta
\]
hence the conclusions.
\end{proof}

Moreover, we deduce

\begin{corollary}
If $(\xi,\lambda)$ defines a Riemann or a Ricci soliton on a Sasakian manifold,
then its scalar curvature is constant.
\end{corollary}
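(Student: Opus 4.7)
The plan is to read this Corollary as a direct specialization of the preceding Proposition. Recall from Section~2 that a Sasakian manifold is precisely the $(\alpha,\beta)$-contact metric manifold with $\alpha=0$ (and $\beta=1$). Moreover, since the Corollary concerns a genuine Riemann or Ricci soliton — not an \emph{almost} soliton — the parameter $\lambda$ is by definition a real constant, not merely a smooth function.

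Plugging $\alpha=0$ into part~(2) of the Proposition therefore collapses the scalar curvature expressions: in the Riemann case $\scal=2n(2n+1)\lambda$, and in the Ricci case $\scal=(2n+1)\lambda$. In both situations the right-hand side is a constant multiple of the constant $\lambda$, so $\scal$ is constant on $M$.

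There is essentially no obstacle here; the argument is a one-line substitution. The only point that requires any thought is the identification of the Sasakian condition with the vanishing of $\alpha$ in the $(\alpha,\beta)$-formalism, which was already fixed in Section~2. As a sanity check one can also look at part~(1) of the Proposition: for $\alpha=0$ and $\lambda$ constant, the defining functions reduce to $(2n\lambda,0)$, respectively $(\lambda,0)$, so the $\eta\otimes\eta$ correction in $\Ric$ disappears and $M$ is genuinely Einstein with a constant Einstein coefficient, which again forces $\scal$ to be constant and is consistent with the formulas above.
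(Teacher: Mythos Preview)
Your argument is correct and is exactly the intended one: the paper states this corollary immediately after the Proposition with the phrase ``Moreover, we deduce,'' so its proof is precisely the substitution $\alpha=0$ together with the constancy of $\lambda$ for a genuine (not almost) soliton. Your additional remark about the Einstein condition is a valid cross-check but not needed for the result.
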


If $(M,\phi,\xi,\eta,g)$ is an $\alpha$-Kenmotsu manifold, then $\xi$ is
a torse-forming vector field, and we can state

\begin{proposition}
If $(\xi,\lambda)$ defines an almost Riemann soliton on the $(2n+1)$-di\-men\-sional
$\alpha$-Kenmotsu manifold $(M,\phi,\xi,\eta,g)$ satisfying
$R(\xi,\cdot)\cdot\Ric=0$, then
\[
\lambda=\alpha
\text{\qquad and\qquad}
\scal=-2n(2n-1)\alpha.
\]
In particular, under these hypotheses, a Kenmotsu manifold is of negative
scalar curvature.
\end{proposition}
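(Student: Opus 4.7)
My plan is to expand the symmetry hypothesis $R(\xi,\cdot)\cdot\Ric=0$ using the curvature of an $\alpha$-Kenmotsu manifold, substitute the Ricci tensor produced by the almost Riemann soliton equation~\eqref{22} with $V=\xi$, and extract the conclusion $\lambda=\alpha$.

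First I would assemble the two required ingredients. Since $\beta=0$ and $\alpha$ is a nonzero constant, \eqref{16} specializes to $\nabla_X\xi=\alpha(X-\eta(X)\xi)$; observing further that $(\nabla_X\eta)Y=\alpha g(X,Y)-\alpha\eta(X)\eta(Y)$ is symmetric in $X,Y$, so $d\eta=0$, a direct computation yields $R(X,Y)\xi=\alpha^2(\eta(X)Y-\eta(Y)X)$, and the pair-symmetry $R(A,B,C,D)=R(C,D,A,B)$ of the Riemann tensor converts this to
\[
R(\xi,X)Y=\alpha^2\bigl(\eta(Y)X-g(X,Y)\xi\bigr).
\]
On the soliton side, specializing~\eqref{22} to $V=\xi$ (where $\eta(V)=1$ is constant and both $d(\eta(V))$ and $\xi(\eta(V))$ vanish) gives $\Ric=[2n\lambda-(4n-1)\alpha]g+(2n-1)\alpha\,\eta\otimes\eta$, and in particular $\Ric(\xi,W)=2n(\lambda-\alpha)\eta(W)$.

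Next I would apply the derivation rule
\[
(R(\xi,X)\cdot\Ric)(Y,Z)=-\Ric(R(\xi,X)Y,Z)-\Ric(Y,R(\xi,X)Z)=0.
\]
Substituting the formula for $R(\xi,X)$ from the first step and dividing by $\alpha^2\neq 0$ turns the hypothesis into the algebraic identity
\[
\eta(Y)\Ric(X,Z)+\eta(Z)\Ric(X,Y)=g(X,Y)\Ric(\xi,Z)+g(X,Z)\Ric(\xi,Y).
\]
Setting $Y=\xi$ and using $\Ric(\xi,W)=2n(\lambda-\alpha)\eta(W)$, the $\eta$-bilinear contributions cancel and what is left is the Einstein-type condition $\Ric=2n(\lambda-\alpha)g$. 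Matching this against the two-piece expression for $\Ric$ from~\eqref{22} and comparing the $g$- and $\eta\otimes\eta$-coefficients produces $\lambda=\alpha$; then~\eqref{s2} yields $\scal=2n[(2n+1)\alpha-4n\alpha]=-2n(2n-1)\alpha$, which for a Kenmotsu manifold ($\alpha=1$, $n\ge 1$) is manifestly negative.

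The main obstacle is the final matching step: the Einstein-type condition $\Ric=2n(\lambda-\alpha)g$ extracted from the symmetry hypothesis and the two-piece formula $\Ric=[2n\lambda-(4n-1)\alpha]g+(2n-1)\alpha\,\eta\otimes\eta$ coming from the soliton equation carry different structural information, and reconciling them demands a careful coefficient comparison using the identities $\eta(\xi)=1$ and $g(\cdot,\xi)=\eta$ to collapse the $\eta\otimes\eta$ part to the single scalar relation $\lambda=\alpha$ (after which the scalar curvature claim is just a substitution into~\eqref{s2}).
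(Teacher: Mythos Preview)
Your argument diverges from the paper at the very first step and, because of that, the final matching does not give what you claim. The paper computes $R(\xi,Y)Z$ \emph{from the Riemann soliton equation}~\eqref{3} (using only $\pounds_\xi g=2\alpha(g-\eta\otimes\eta)$), obtaining
\[
R(\xi,Y)Z=(\lambda-\alpha)\bigl[g(Y,Z)\xi-\eta(Z)Y\bigr],
\]
so the symmetry condition $R(\xi,\cdot)\cdot\Ric=0$ produces the identity with the prefactor $(\lambda-\alpha)$:
\[
(\lambda-\alpha)\bigl[2n(\lambda-\alpha)g-\Ric\bigr]=0.
\]
Either $\lambda=\alpha$, or $\Ric=2n(\lambda-\alpha)g$; the second alternative is then ruled out by the soliton expression for $\Ric$, forcing $\lambda=\alpha$.

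You instead use the intrinsic $\alpha$-Kenmotsu formula $R(\xi,X)Y=\alpha^2\bigl(\eta(Y)X-g(X,Y)\xi\bigr)$, divide by $\alpha^2\ne 0$, and arrive unconditionally at $\Ric=2n(\lambda-\alpha)g$. But now compare this with your own soliton formula $\Ric=[2n\lambda-(4n-1)\alpha]g+(2n-1)\alpha\,\eta\otimes\eta$: equating $g$-coefficients gives $2n(\lambda-\alpha)=2n\lambda-(4n-1)\alpha$, i.e.\ $(2n-1)\alpha=0$, and the $\eta\otimes\eta$-coefficient gives the same thing. The relation $\lambda=\alpha$ never appears (indeed, evaluating both expressions at $(\xi,\xi)$ yields $2n(\lambda-\alpha)$ on each side, so no information is gained there either). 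Your ``careful coefficient comparison'' therefore does not produce $\lambda=\alpha$; it produces $(2n-1)\alpha=0$, which contradicts the $\alpha$-Kenmotsu assumption. The missing ingredient is precisely the factor $(\lambda-\alpha)$ that the paper obtains by extracting $R(\xi,\cdot)$ from the soliton equation rather than from the Kenmotsu structure: that factor supplies the disjunction whose first branch is the desired conclusion.
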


\begin{proof}
Since $\nabla\xi=\alpha(\Id-\eta\otimes\xi)$, from~\eqref{3} we get
$$
R(X,Y)Z=(\lambda-2\alpha)[g(Y,Z)X-g(X,Z)Y]+\alpha\eta(Z)[\eta(Y)X-\eta(X)Y]$$$$
+\alpha[g(Y,Z)\eta(X)-g(X,Z)\eta(Y)]\xi
$$
and, in particular
\begin{equation}\label{8}
R(\xi,Y)Z=(\lambda-\alpha)[g(Y,Z)\xi-\eta(Z)Y].
\end{equation}

The condition that must be satisfied by $\Ric$ is
\begin{equation}\label{88}
\Ric(R(\xi,X)Y,Z)+\Ric(Y,R(\xi,X)Z)=0,
\end{equation}
for any $X$, $Y$, $Z\in\mathfrak{X}(M)$.

Replacing the expression of $R(\xi,\cdot)\cdot$ from~\eqref{8} in~\eqref{88},
we get
\begin{equation}\label{h}
(\lambda-\alpha)[g(X,Y)\Ric(\xi,Z)-\eta(Y)\Ric(X,Z)+g(X,Z)\Ric(\xi,Y)-\eta(Z)\Ric(Y,X)]=0,
\end{equation}
for any $X$, $Y$, $Z\in\mathfrak{X}(M)$. But in an $\alpha$-Kenmotsu manifold,
we have
\[
\Ric(\xi,X)=-[2n\alpha^2+\xi(\alpha)]\eta(X)-(2n-1)X(\alpha)
\]
and from the soliton condition we get
$\Ric(\xi,X)=2n(\lambda-\alpha)\eta(X)$,
which implies
\[
\grad(\alpha)=-\frac{1}{2n-1}[2n(\lambda-\alpha+\alpha^2)+\xi(\alpha)]\xi.
\]

Replacing $\Ric(\xi,\cdot)$ in~\eqref{h}, we obtain
$$
(\lambda-\alpha)\{2n(\lambda-\alpha)[g(X,Y)\eta(Z)+g(X,Z)\eta(Y)]
-[\eta(Y)\Ric(X,Z)+\eta(Z)\Ric(X,Y)]\}=0,
$$
for any $X$, $Y$, $Z\in\mathfrak{X}(M)$.

For $Z:=\xi$, we have
\[
(\lambda-\alpha)[2n(\lambda-\alpha)g-\Ric]=0
\]
and using again the soliton condition, we get $\lambda=\alpha$,
hence the conclusion.
\end{proof}

Taking $V=\xi$ and differentiating covariantly~\eqref{22}, we get
\begin{equation}\label{o}
\begin{aligned}
(\nabla_X\Ric)(Y,Z)={}&X(\Ric(Y,Z))-\Ric(\nabla_XY,Z)-\Ric(Y,\nabla_XZ)\\
={}&[2nX(\lambda)-(4n-1)X(\alpha)]g(Y,Z)\\
&+(2n-1)X(\alpha)\eta(Y)\eta(Z)\\
&+(2n-1)\alpha[(\nabla_X\eta)Y\cdot\eta(Z)+(\nabla_X\eta)Z\cdot\eta(Y)]
\end{aligned}
\end{equation}
and differentiating covariantly~\eqref{23}, we get
\begin{align}
(\nabla_X\Ric)(Y,Z)={}&X(\Ric(Y,Z))-\Ric(\nabla_XY,Z)-\Ric(Y,\nabla_XZ)\notag\\
={}&[X(\lambda)-X(\alpha)]g(Y,Z)+X(\alpha)\eta(Y)\eta(Z)\label{oo}\\
&+\alpha[(\nabla_X\eta)Y\cdot\eta(Z)+(\nabla_X\eta)Z\cdot\eta(Y)],\notag
\end{align}
for any $X$, $Y$, $Z\in\mathfrak{X}(M)$. Then we can state

\begin{proposition}
If $(\xi,\lambda)$ defines an almost Riemann, respectively, an almost Ricci soliton
on the $(2n+1)$-dimen\-sio\-nal $(\alpha,\beta)$-contact
metric manifold $(M,\phi,\xi,\eta,g)$ satisfying $\nabla \Ric =0$,
then $d \lambda=d \alpha$, respectively, $\lambda$ is a constant.
\end{proposition}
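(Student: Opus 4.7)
The plan is to exploit the explicit formulas~\eqref{o} and~\eqref{oo} already derived in the excerpt, together with the hypothesis $\nabla\Ric=0$, by a judicious choice of test arguments that kills the terms containing $\nabla\eta$.

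The first step is a small preparatory computation: from $\eta=i_{\xi}g$ and $\nabla g=0$ one gets $(\nabla_{X}\eta)(Y)=g(\nabla_{X}\xi,Y)$, hence
\[
(\nabla_{X}\eta)(\xi)=g(\nabla_{X}\xi,\xi)=\tfrac{1}{2}X(g(\xi,\xi))=0.
\]
Alternatively one can read this off directly from~\eqref{16}, since $\nabla_{X}\xi=-\alpha\phi^{2}X-\beta\phi X$ lies in $\ker(\eta)$. This identity will cause the last bracket in both~\eqref{o} and~\eqref{oo} to vanish whenever $Y$ or $Z$ is set equal to $\xi$.

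Next I would specialize~\eqref{o} (in the almost Riemann case) and~\eqref{oo} (in the almost Ricci case) to $Y=Z=\xi$ and impose $\nabla\Ric=0$. Since $g(\xi,\xi)=\eta(\xi)=1$, equation~\eqref{o} collapses to
\[
[2nX(\lambda)-(4n-1)X(\alpha)]+(2n-1)X(\alpha)=0,
\]
which simplifies to $2nX(\lambda)=2nX(\alpha)$, i.e.\ $d\lambda=d\alpha$. Similarly, equation~\eqref{oo} reduces to
\[
[X(\lambda)-X(\alpha)]+X(\alpha)=0,
\]
giving $X(\lambda)=0$ for every $X\in\mathfrak{X}(M)$, so $\lambda$ is a constant.

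There is really no technical obstacle here; the identities~\eqref{o} and~\eqref{oo} have been set up precisely to make this substitution work, and the only auxiliary fact needed is $(\nabla_{X}\eta)(\xi)=0$, which is immediate. So the proof should be essentially a two-line verification after recalling this identity.
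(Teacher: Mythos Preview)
Your proposal is correct and follows exactly the same approach as the paper: substitute $Y=Z=\xi$ into \eqref{o} and \eqref{oo} under the hypothesis $\nabla\Ric=0$ to obtain $X(\lambda-\alpha)=0$ and $X(\lambda)=0$, respectively. Your added justification that $(\nabla_{X}\eta)(\xi)=0$ makes explicit a step the paper leaves implicit, but otherwise the arguments coincide.
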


\begin{proof}
Taking $Y=Z=\xi$ in~\eqref{o}, the condition $\nabla\Ric=0$ implies
$X(\lambda-\alpha)=0$, for any $X\in\mathfrak{X}(M)$.

Taking $Y=Z=\xi$ in~\eqref{oo}, the condition $\nabla\Ric=0$ implies
$X(\lambda)=0$, for any $X\in\mathfrak{X}(M)$.

Therefore we get the conclusions.
\end{proof}

\begin{proposition}
Let $(\xi,\lambda)$ define an almost Riemann, respectively, an almost Ricci soliton
on the $(2n+1)$-di\-men\-sional $(\alpha,\beta)$-contact
metric manifold $(M,\phi,\xi,\eta,g)$.
\begin{enumerate}
\item
If $\nabla Q=0$, then $\alpha=0$ and $M$ is of constant scalar curvature
$2n(2n+1)\lambda$, respectively, $(2n+1)\lambda$.
\item
If $\phi^2\circ\nabla Q=0$, then $M$ is a cosymplectic manifold or
$\alpha=0$, $\beta\neq 0$, $\lambda$ is a constant and $M$ is of
constant scalar curvature $2n(2n+1)\lambda$, respectively, $(2n+1)\lambda$.
\end{enumerate}
\end{proposition}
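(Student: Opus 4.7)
The plan is to substitute $V=\xi$ into the expressions~\eqref{22} and~\eqref{23} for the Ricci operator $Q$, compute $(\nabla_Y Q)X$ by a direct differentiation, and then test the hypothesis against the cleverly chosen arguments $X=\xi$ and $X\in\ker\eta$. Since $\eta(\xi)=1$, the terms $d(\eta(V))$, $\grad(\eta(V))$ and $\xi(\eta(V))$ all vanish, so in both settings
\[
QX = A\,X + B\,\eta(X)\,\xi,
\]
with $(A,B)=(2n\lambda-(4n-1)\alpha,\ (2n-1)\alpha)$ in the Riemann case and $(A,B)=(\lambda-\alpha,\ \alpha)$ in the Ricci case. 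Using $(\nabla_Y\eta)(X)=g(\nabla_Y\xi,X)$, one finds
\[
(\nabla_Y Q)X = Y(A)\,X + \bigl[Y(B)\eta(X)+B(\nabla_Y\eta)(X)\bigr]\xi + B\,\eta(X)\,\nabla_Y\xi.
\]

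For part (1), the condition $(\nabla_Y Q)X=0$ evaluated at $X=\xi$ reduces to $[Y(A)+Y(B)]\xi+B\,\nabla_Y\xi=0$. Since $\nabla_Y\xi\perp\xi$, its $\xi$-orthogonal component forces $B\,\nabla_Y\xi=0$ for every $Y$. The algebraic fact that $\nabla\xi=0$ forces $\alpha=\beta=0$ (because $\alpha\phi^2+\beta\phi=0$ restricted to $\ker\eta$ yields $\alpha^2+\beta^2=0$ after applying $\phi$) then implies $B=0$, hence $\alpha=0$ in both settings. Substituting $\alpha=0$ back into $(\nabla_Y Q)X=0$ for general $X$ leaves $Y(A)X=0$, so $A$, and therefore $\lambda$, is constant; the scalar curvature values follow immediately from~\eqref{s2} and~\eqref{s1}.

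For part (2), the condition $\phi^2\circ\nabla Q=0$ is equivalent to the $\ker\eta$-component of $(\nabla_Y Q)X$ vanishing, since $\phi^2=-\Id+\eta\otimes\xi$. Evaluating $\phi^2((\nabla_Y Q)\xi)$ and using $\phi^2\xi=0$, $\phi^2(\nabla_Y\xi)=-\nabla_Y\xi$ again gives $B\,\nabla_Y\xi=0$; by the same algebraic dichotomy, either $M$ is cosymplectic or $\alpha=0$. In the second branch, feeding $\alpha=0$ (so $B=0$) and a nonzero $X\in\ker\eta$ into $\phi^2((\nabla_Y Q)X)=-Y(A)X=0$ yields $Y(\lambda)=0$, so $\lambda$ is constant; splitting further by whether $\beta\equiv 0$ (cosymplectic) or $\beta\not\equiv 0$ produces the stated alternatives, and~\eqref{s2},~\eqref{s1} close out the scalar curvature formulas.

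The main bookkeeping hurdle is to cleanly separate the $\xi$-parallel and $\xi$-orthogonal components of $(\nabla_Y Q)X$ so that the evaluation at $X=\xi$ isolates the term $B\,\nabla_Y\xi$; once this is set up, everything collapses to the elementary fact that $\nabla\xi$ cannot vanish in an $(\alpha,\beta)$-contact metric structure unless $\alpha=\beta=0$.
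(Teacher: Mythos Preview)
Your proof is correct and follows essentially the same strategy as the paper: compute $(\nabla_Y Q)X$ from the expression $Q=A\,\Id+B\,\eta\otimes\xi$, evaluate at $X=\xi$, and split into $\xi$- and $\ker\eta$-components to force first $\alpha=0$ and then the constancy of $\lambda$. Your $A,B$ packaging and the orthogonal decomposition $\nabla_Y\xi\perp\xi$ are a tidier version of the paper's explicit expansion, and your argument in part~(2) actually yields $Y(\lambda)=0$ unconditionally once $\alpha=0$ (the paper obtains only $\beta\,X(\lambda)=0$), which is slightly stronger but still lands on the stated dichotomy.
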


\begin{proof}
In the Riemann soliton case, we have
\[
\begin{aligned}
(\nabla_XQ)Y={}&\nabla_XQY-Q(\nabla_XY)\\
={}&[2nX(\lambda)-(4n-1)X(\alpha)]Y+(2n-1)\alpha^2\eta(Y)X\\
&-(2n-1)\alpha\beta\eta(Y)\phi X\\
&+(2n-1)[X(\alpha)\eta(Y)-\alpha^2\eta(X)\eta(Y)+\alpha(\nabla_X\eta)Y]\xi,
\end{aligned}
\]
for any $X$, $Y\in\mathfrak{X}(M)$ and taking $Y=\xi$, from $\nabla Q=0$ we get
\[
[2nX(\lambda-\alpha)-(2n-1)\alpha^2\eta(X)]\xi=
(2n-1)\alpha(\beta\phi X-\alpha X).
\]
Applying $\eta$, we obtain $X(\lambda-\alpha)=0$, for any
$X\in\mathfrak{X}(M)$ and replacing it in the previous relation, we get
$\alpha\nabla_X\phi=0$, for any $X\in\mathfrak{X}(M)$, hence the conclusion i).

Now, from $\phi^2((\nabla_X Q)Y)=0$, we get
$$
[2nX(\lambda)-(4n-1)X(\alpha)][\alpha Y-\alpha \eta(Y)\xi-\beta\phi Y]
+(2n-1)\alpha^2\eta(Y)[\alpha X-\alpha\eta(X)\xi-\beta\phi X]$$$$+
(2n-1)\alpha\beta\eta(Y)\phi X=0
$$
and by taking $Y=\xi$, we obtain
\[
\alpha(\alpha\nabla_X\xi+\beta\phi X)=0,
\]
for any $X\in\mathfrak{X}(M)$, which implies $\alpha=0$.
Replacing it in the above relation, we get $\beta X(\lambda)=0$,
and we deduce ii).

In the Ricci soliton case, we have
\begin{align*}
(\nabla_XQ)Y={}&\nabla_XQY-Q(\nabla_XY)\\
={}&X(\lambda-\alpha)Y+\alpha^2\eta(Y)X-\alpha\beta\eta(Y)\phi X\\
&+[X(\alpha)\eta(Y)-\alpha^2\eta(X)\eta(Y)+\alpha(\nabla_X\eta)Y]\xi,
\end{align*}
for any $X$, $Y\in\mathfrak{X}(M)$ and taking $Y=\xi$, from $\nabla Q=0$
we get
\[
[X(\lambda)-\alpha^2\eta(X)]\xi=\alpha(\beta\phi X-\alpha X).
\]
Applying $\eta$, we obtain $X(\lambda)=0$, for any $X\in\mathfrak{X}(M)$ and
replacing it in the previous relation, we get $\alpha\nabla_X\phi=0$, for any
$X\in\mathfrak{X}(M)$, hence the conclusion i).

Now, from $\phi^2((\nabla_X Q)Y)=0$, we get
$$
X(\lambda-\alpha)[\alpha Y-\alpha\eta(Y)\xi-\beta\phi Y]+
\alpha^2\eta(Y)[\alpha X-\alpha\eta(X)\xi-\beta \phi X]
+\alpha\beta\eta(Y)\phi X=0
$$
and by taking $Y=\xi$, we obtain
\[
\alpha(\alpha\nabla_X\xi+\beta\phi X)=0,
\]
for any $X\in\mathfrak{X}(M)$, which implies $\alpha=0$.
Replacing it in the above relation, we get $\beta X(\lambda)=0$,
and in this way we deduce ii).
\end{proof}

\let\ol=\overline

\begin{remark}
If $(V,\lambda)$ defines an almost Riemann soliton and $(V,\ol{\lambda})$ defines an almost
Ricci soliton on the $(2n+1)$-dimensional
$(\alpha,\beta)$-contact metric manifold $(M,\phi,\xi,\eta,g)$, $n>1$, then $M$ is an Einstein manifold with
\[\Ric=\frac{2n}{4n-1}(2\ol{\lambda}-\lambda)g
\] and
\[
\ol{\lambda}=\xi(\eta(V))+2n[\beta^2-\alpha^2-\xi(\alpha)].
\]
Moreover, if $V$ is pointwise collinear with $\xi$,
then
\[
\lambda=2\xi(\eta(V))+\beta^2-\alpha^2-\xi(\alpha).
\]
Indeed, using~\eqref{1} and~\eqref{4} we get the expression of $\Ric$.
Now computing~\eqref{1} for $X=Y=\xi$ and taking into account that
$\Ric(\xi,\xi)=2n[\beta^2-\alpha^2-\xi(\alpha)]$~\cite{des}, we obtain $\ol{\lambda}$ and
then also $\lambda$.
\end{remark}

\begin{remark}
If $(V,\lambda)$ defines an almost Riemann soliton on the $(2n+1)$-dimensional
$(\alpha,\beta)$-contact metric manifold $(M,\phi,\xi,\eta,g)$, $n>1$, then $(\ol{V}, \ol{\lambda})$, for $\ol{V}=(2n-1)V$, $\ol{\lambda}=2n\lambda-\div(V)$, defines an almost
Ricci soliton. Moreover, if $V$ is pointwise collinear with $\xi$,
then
\begin{align*}
\lambda={}&\xi(\eta(V))+\alpha\eta(V)+\beta^2-\alpha^2-\xi(\alpha)\\
\ol{\lambda}={}&(2n-1)\xi(\eta(V))+2n[\beta^2-\alpha^2-\xi(\alpha)].
\end{align*}
Indeed, from~\eqref{1} and~\eqref{4} we deduce that $(\ol{V}=(2n-1)V, \ol{\lambda}=2n\lambda-\div(V))$ defines an almost
Ricci soliton. Computing~\eqref{4} for $X=Y=\xi$ and using $\Ric(\xi,\xi)=2n[\beta^2-\alpha^2-\xi(\alpha)]$
and~\eqref{hh} we obtain the expressions of $\lambda$ and $\ol{\lambda}$.
\end{remark}

\begin{remark}
We saw that any almost Riemann soliton $(V,\lambda)$ on an $m$-dimensional Riemannian manifold $(M,g)$, gives rise to an almost Ricci soliton $(\ol{V}, \ol{\lambda})$, for $\ol{V}=(m-2)V$, $\ol{\lambda}=(m-1)\lambda-\div(V)$. Moreover, if $\grad(\lambda)=\frac{1}{m-1}\grad(\div(V))$, then an almost Riemann soliton determines a Ricci soliton.

For $m\geq 3$
we have the decomposition of the Riemann curvature tensor with respect to the Weil curvature tensor $\mathcal{W}$
$$R=\mathcal{W}-\frac{\scal}{2(m-1)(m-2)}g\odot g+\frac{1}{m-2}\Ric\odot g.$$

If $(V,\lambda)$ defines an almost Riemann soliton on $M$, then
\begin{align*}
R={}&-\frac{1}{2}\pounds_{V}g\odot g+\frac{1}{2}\lambda g\odot g\\
={}&\frac{1}{m-2}\Ric\odot g+\frac{1}{2(m-2)}[2\div(V)-m\lambda]g\odot g\\
={}&\frac{1}{m-2}\Ric\odot g-\frac{\scal}{2(m-1)(m-2)}g\odot g
\end{align*}
which implies that
the Weil curvature tensor must vanish.

On the other hand, if $(V,\lambda)$ defines an almost Ricci soliton on $M$, then
\[
\frac{1}{2}\pounds_{V}g\odot g=-\Ric\odot g+\lambda g\odot g
\]
therefore it will determine an almost Riemann soliton with the potential vector field $V$ if and only if there exists a smooth function $\ol{\lambda}$ such that
\[
R=(\Ric +\frac{1}{2}(\ol{\lambda}-2\lambda)g)\odot g.
\]
\end{remark}

Now we show that the only $(\alpha,\beta)$-contact metric manifolds for which
$(\xi,\lambda)$ defines two types of the following solitons: almost Riemann,
almost Ricci, almost Yamabe, are Ricci-flat cosymplectic manifolds. Recall
that $(V,\lambda)$ defines an almost Yamabe soliton on $(M,g)$ if
\begin{equation}\label{y}
\pounds_{V}g=(\lambda-\scal)g.
\end{equation}

\begin{proposition}
Let $(M,\phi,\xi,\eta,g)$ be a $(2n+1)$-di\-men\-sional $(\alpha,\beta)$-contact metric manifold.
If $(\xi,\lambda)$ defines an almost Riemann and an almost Ricci soliton,
or an almost Riemann and an almost Yamabe soliton, or an almost Ricci and
an almost Yamabe soliton on $M$, then
\[
\alpha=0,
\qquad
\beta=0,
\qquad
\lambda=0,
\qquad
\Ric=0
\]
hence $M$ is a Ricci-flat cosymplectic manifold.
\end{proposition}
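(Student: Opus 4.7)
The plan is to handle the three pairings separately, with a common finishing step. Throughout I will use only three tools: the Ricci and scalar identities (22), (s2), (23), (s1) already derived in this section; the basic identity $\pounds_\xi g=2\alpha(g-\eta\otimes\eta)$ from (16); and the standard formula $\Ric(\xi,\xi)=2n[\beta^2-\alpha^2-\xi(\alpha)]$ invoked in Remark 3.5.

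For either pairing that involves the almost Yamabe equation (y), I would combine (y) with (16) to obtain $(\lambda-\scal)g=2\alpha(g-\eta\otimes\eta)$. Evaluating at $(\xi,\xi)$ yields $\lambda=\scal$, while evaluating at a unit vector orthogonal to $\xi$ (which exists since $n\ge 1$) forces $\alpha=0$. In the almost Riemann + almost Yamabe case, substituting $\alpha=0$ into (s2) gives $\scal=2n(2n+1)\lambda$, and together with $\lambda=\scal$ this yields $\lambda=0$; feeding $\alpha=\lambda=0$ into (22) then gives $\Ric=0$. The almost Ricci + almost Yamabe case is parallel: (s1) reduces to $\scal=(2n+1)\lambda$ and then $\lambda=0$, $\Ric=0$ from (23).

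For the almost Riemann + almost Ricci case, I would equate the two Ricci tensor expressions in (22) and (23) and simplify to
\[
(2n-1)(\lambda-2\alpha)g+(2n-2)\alpha\,\eta\otimes\eta=0.
\]
Evaluating on a unit vector in $\ker(\eta)$ gives $\lambda=2\alpha$, and substituting back at $(\xi,\xi)$ gives $(2n-2)\alpha=0$, whence $\alpha=0$ for $n\ge 2$. Then $\lambda=0$ and $\Ric=0$ follow immediately from either (22) or (23).

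At this point, in every case, $\alpha\equiv 0$ and $\Ric\equiv 0$. Then the identity $\Ric(\xi,\xi)=2n[\beta^2-\alpha^2-\xi(\alpha)]$ collapses to $2n\beta^2=0$, so $\beta=0$ as well, and the manifold is cosymplectic by definition. The main obstacle, and the only truly delicate point, is the three-dimensional subcase ($n=1$) of the Riemann + Ricci pairing: there the coefficient $2n-2$ vanishes, so the algebraic elimination degenerates and only the relation $\lambda=2\alpha$ is forced. In that dimension one would have to bring in the specific identities for three-dimensional trans-Sasakian manifolds to kill $\alpha$; for $n\ge 2$, however, the proof is entirely algebraic, following the pattern of Remark 3.5 where an $n>1$ hypothesis likewise appears.
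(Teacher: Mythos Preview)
Your argument is correct and follows essentially the same route as the paper's: the paper simply says ``equating $\pounds_{\xi}g$ from the soliton's equations~\eqref{4},~\eqref{1} and~\eqref{y}'', which is equivalent to your comparison of the $\Ric$-expressions~\eqref{22} and~\eqref{23} and your use of~\eqref{16} against~\eqref{y}; the finishing step via $\Ric(\xi,\xi)=2n\beta^{2}$ is identical. The one substantive difference is that you explicitly isolate the $n=1$ degeneracy in the Riemann\,+\,Ricci pairing (where the coefficient $2n-2$ vanishes and only $\lambda=2\alpha$ survives), whereas the paper's terse proof does not single this case out; your caution there is well placed, and mirrors the $n>1$ hypothesis appearing in Remarks~3.8--3.9.
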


\begin{proof}
Equating $\pounds_{\xi}g$ from the soliton's equations~\eqref{4},~\eqref{1}
and~\eqref{y}, we obtain $\alpha{=}0$, \ $\lambda=0$ \ and \ $\Ric=0$. From \ $\Ric(\xi,\xi)=2n[\beta^2-\alpha^2-\xi(\alpha)]$
\ we get also $\beta=0$.
\end{proof}

Adara M. Blaga\\
West University of Timi\c{s}oara\\
Timi\c{s}oara, 300223, ROM\^{A}NIA\\
\emph{E-mail address}: {\tt adarablaga@yahoo.com}

\smallskip
Dan Radu La\c tcu\\
West University of Timi\c{s}oara\\
Timi\c{s}oara, 300223, ROM\^{A}NIA\\
\emph{E-mail address}: {\tt latcu07@yahoo.com}

\end{document}